\documentclass{article}

\usepackage[utf8]{inputenc}
\usepackage{amsmath}
\usepackage{amsthm}
\usepackage{amssymb}
\usepackage{mathabx}
\usepackage{mathtools}
\usepackage{faktor}
\usepackage{graphicx}

\DeclarePairedDelimiter{\set}{\{}{\}}
\DeclarePairedDelimiter{\abs}{|}{|}

\renewcommand{\phi}{\varphi}

\newcommand{\mc}{\mathcal}

\newcommand{\nat}{\ensuremath{\mathbb{N}}}

\DeclareMathOperator{\Aut}{Aut}

\DeclareMathOperator{\MCG}{MCG}

\newtheorem{theorem}{Theorem}

\newtheorem{lemma}[theorem]{Lemma}
\newtheorem{cor}[theorem]{Corollary}
\newtheorem{fact}[theorem]{Fact}
\newtheorem{remark}[theorem]{Remark}

\newcommand{\loopg}{\mc{L}}
\newcommand{\arcg}{\mc{A}}

\title{Automorphisms of the loop and arc graph of an infinite-type surface}
\author{Anschel Schaffer-Cohen}

\begin{document}

\maketitle

\begin{abstract}
	We show that the extended based mapping class group of an infinite-type surface is naturally isomorphic to the automorphism group of the loop graph of that surface. Additionally, we show that the extended mapping class group stabilizing a finite set of punctures is isomorphic to the arc graph relative to that finite set of punctures. This extends a known result for sufficiently complex finite-type surfaces, and provides a new angle from which to study the mapping class groups of infinite-type surfaces.
\end{abstract}

\section{Introduction and Main Result}\label{intro}

In the tradition of geometric group theory, mapping class groups are often studied via their actions on metric spaces, most famously the Teichmüller space of the underlying surface. The mapping class group of a finite-type orientable surface also acts especially nicely on several simplicial complexes that have been defined for this purpose. These include the curve complex, the pants complex, and the arc complex.

The curve complex and arc complex were defined for general infinite-type surfaces by Aramayona, Fossas, and Parlier in \cite{aramayona}, by analogy with existing definitions in the finite-type case and generalizing a definition for the plane minus a Cantor set given by Bavard in \cite{bavard}. Masur and Minsky \cite{MM} and Masur and Schleimer \cite{MS} showed that both these complexes are infinite-diameter Gromov hyperbolic in the finite case, and Hensel, Przytycki, and Webb \cite{hensel} give an especially nice proof that the hyperbolicity constant does not depend on the surface. In addition, Ivanov \cite{ivanov}, Korkmaz \cite{kork}, Luo \cite{luo}, and Irmak and McCarthy \cite{irmak} showed that the extended mapping class group is isomorphic to the automorphism groups of both complexes.

Hernández Hernández, Morales, and Valdez recently showed in \cite{HMV} that in the infinite-type case the mapping class group is still isomorphic to the automorphism group of the curve graph, which was also shown independently by Bavard, Dowdall, and Rafi in \cite{BDR}. However, the curve graph of an infinite-type surface has diameter $2$, making it quasi-isometrically trivial.

On the other hand, the relative arc graph defined by Aramayona, Fossas, and Parlier is infinite-diameter Gromov hyperbolic, with a hyperbolicity constant that does not depend on the surface \cite{aramayona}. Theorem \ref{main_arc} shows that, with appropriate restrictions, the mapping class group is precisely the group of automorphisms of this graph. We start with Theorem \ref{main}, which shows this result in the special case of the loop graph, an interesting structure in its own right.

For our purposes, a \emph{surface} $\Sigma$ is a connected, oriented $2$-manifold. We call $\Sigma$ \emph{finite-type} if $\pi_1(\Sigma)$ is finitely generated, and \emph{infinite-type} otherwise.

Fix a basepoint $p \in \Sigma$. A \emph{loop} in $\Sigma$ based at $p$ is an unoriented simple closed curve in $\Sigma$ starting and ending at $p$, considered up to isotopy fixing $p$. The \emph{loop graph} $\loopg(\Sigma, p)$ of $\Sigma$ is the graph whose vertex set is the set of all possible loops in $\Sigma$ and where two vertices are connected by an edge if they have representatives intersecting only at $p$.

The \emph{extended based mapping class group} $\MCG^*(\Sigma, p)$ of $\Sigma$ is the group of (possibly orientation-reversing) homeomorphisms $\Sigma \to \Sigma$ that fix the basepoint $p$, considered up to isotopy preserving $p$. By construction, $\MCG^*(\Sigma, p)$ acts on $\loopg(\Sigma, p)$, giving a homomorphism $\MCG^*(\Sigma, p) \to \Aut(\loopg(\Sigma, p))$. The main result of this paper is that this is in fact an isomorphism.

\begin{theorem}\label{main}
	Given an infinite-type surface $\Sigma$ and a basepoint $p \in \Sigma$, the map $\MCG^*(\Sigma, p) \to \Aut(\loopg(\Sigma, p))$ induced by the action is an isomorphism.
\end{theorem}

Irmak and McCarthy \cite{irmak} provide a successful program for proving this theorem in the finite-type case:
\begin{enumerate}
	\item Fix a maximal set of disjoint loops, called a \emph{triangulation}, and observe that its complementary regions are all triangles.
	\item Show that some important local properties---three loops bounding a triangle, two triangles being adjacent, etc.---can be defined in terms of the loop graph and are therefore preserved by automorphisms of that graph. We use some of these results as Facts \ref{IM_triangle} and \ref{IM_adj}.
	\item Use these local properties to construct a homeomorphism inducing a given transformation of our fixed triangulation.
	\item Show that a homeomorphism which fixes a triangulation actually fixes the entire loop graph. We use this result as Fact \ref{IM_enough}.
\end{enumerate}

In our extension to the infinite-type case we have an advantage, a disadvantage, and a trick. The advantage is of course that we can depend on the existing result for finite-type surfaces. The disadvantage is that ``triangulations'' in the infinite-type setting can be much more exotic, as seen in Section \ref{general}. They will in fact have some complementary regions that are not actually triangles. The trick is to notice that Irmak and McCarthy's proof is more general than the result requires: they start by fixing an arbitrary triangulation, but for their proof (and ours) it is sufficient to follow this program with any single triangulation. Thus we can construct a particularly useful triangulation for the specific purpose of building our homeomorphism.

In Section \ref{arc_graph}, we generalize Theorem \ref{main} to the relative arc graph, which we define as follows: let $P$ be a finite set of punctures (i.e.\ isolated planar ends) of an infinite-type surface $\Sigma$. An \emph{arc} in $\Sigma$ relative to $P$ is an unoriented embedded line whose ends approach two (not necessarily distinct) punctures in $\Sigma$, considered up to isotopy.\footnote{An isotopy of an arc cannot change its endpoints, since they are not included in $\Sigma$.} The \emph{relative arc graph} $\arcg(\Sigma, P)$ of $\Sigma$ relative to $P$ is the graph whose vertex set is the set of all possible arcs relative to $P$ in $\Sigma$ and where two vertices are connected by an edge if they have disjoint representatives. The \emph{extended mapping class group} $\MCG^*(\Sigma, P)$ of (possibly orientation-reversing) homeomorphisms stabilizing (but not necessarily fixing) the set $P$ acts on $\arcg(\Sigma, P)$. Then we can prove the following:

\begin{theorem}\label{main_arc}
	Given an infinite-type surface $\Sigma$ and a finite set $P$ of punctures of $\Sigma$, the map $\MCG^*(\Sigma, P) \to \Aut(\arcg(\Sigma, P))$ induced by the action is an isomorphism.
\end{theorem}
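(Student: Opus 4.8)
The plan is to deduce Theorem \ref{main_arc} from Theorem \ref{main} rather than to rerun the Irmak--McCarthy program for arcs. That the map is a well-defined homomorphism is immediate. For injectivity I would use an Alexander-method argument: the vertex set of $\arcg(\Sigma, P)$ contains families of arcs that fill $\Sigma$, so a mapping class acting trivially fixes a filling system of isotopy classes---and hence each puncture of $P$---and is therefore isotopic to the identity. The real content is surjectivity: realizing an arbitrary $\Phi \in \Aut(\arcg(\Sigma, P))$ by a homeomorphism.

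Consider first $P = \{p\}$. Capping the puncture $p$ to a marked point gives an infinite-type surface $\hat\Sigma$ with basepoint $p$, under which an arc relative to $\{p\}$ becomes a loop based at $p$; as there are no other distinguished punctures to obstruct isotopies, isotopy classes correspond exactly and disjointness of arcs matches meeting-only-at-$p$ of loops. Hence $\arcg(\Sigma, \{p\}) \cong \loopg(\hat\Sigma, p)$ and $\MCG^*(\Sigma, \{p\}) \cong \MCG^*(\hat\Sigma, p)$, and Theorem \ref{main} applies verbatim.

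For general $P$ the first task is to show that $\Phi$ respects endpoints. I would establish a purely combinatorial characterization, in terms of adjacency in $\arcg(\Sigma, P)$, of when an arc is \emph{monochromatic} (both ends at a single puncture) and of when two arcs share an endpoint; this makes $\Phi$ induce a permutation $\sigma$ of $P$. Since any permutation of a finite set of punctures is realized by a homeomorphism---for instance by half-twists supported in twice-punctured disks along arcs joining the punctures---we may pre-compose and assume $\sigma = \id$, so that $\Phi$ preserves, for each $p_i$, the full subgraph $\arcg_{ii}$ spanned by the monochromatic arcs at $p_i$. Exactly as in the base case, $\arcg_{ii}$ with its induced adjacency is the loop graph $\loopg(\Sigma^{\bullet}_i, p_i)$ of the surface obtained by capping only $p_i$. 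Applying Theorem \ref{main} to $\Phi|_{\arcg_{11}}$ yields a homeomorphism $f$ realizing $\Phi$ on $\arcg_{11}$; since the monochromatic arcs at $p_1$ already fill $\Sigma$, a further Alexander-method argument shows that $f$ stabilizes all of $P$ and that $f^{-1}_*\Phi$ fixes every arc (each arc being determined by the set of $p_1$-arcs disjoint from it), so that $\Phi = f_*$.

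I expect the endpoint-detection step to be the main obstacle. Characterizing monochromatic arcs and shared endpoints through adjacency alone is precisely where infinite type is hardest: the exotic configurations that already complicate triangulations in Section \ref{general} rule out naive local counting, and this is where I would concentrate the effort. The filling and rigidity claims in the final step are a secondary point requiring care, although they are morally the standard Alexander method.
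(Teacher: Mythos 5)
Your overall strategy---reduce to Theorem \ref{main} by restricting $\Phi$ to the loops based at a chosen puncture, after correcting by a homeomorphism that permutes the punctures---is the same as the paper's. But the step you yourself flag as ``the main obstacle,'' detecting endpoints combinatorially, is precisely the content of the paper's proof, and you have not supplied it: announcing that you ``would establish a purely combinatorial characterization'' of monochromatic arcs and of shared endpoints is a statement of the problem, not an argument. The paper's solution also shows that you are aiming at more than is needed. It takes an arbitrary maximal clique $T'$ of loops based at $p$ and extends it to a maximal clique $T$ of arcs: maximality forces $T'$ to contain a loop bounding a punctured monogon about each $q \in P \setminus \set{p}$, and the unique extension adds exactly one arc per such $q$, turning each monogon into a degenerate triangle. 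Degenerate triangles are preserved by Fact \ref{IM_triangle}, the doubled edge of each degenerate triangle of $f(T)$ is the \emph{unique} arc of $f(T)$ ending at some puncture, and a count of the $\abs{P}-1$ degenerate triangles leaves exactly one puncture $\phi(p)$ over; every loop of $T'$ must then map to a loop based at $\phi(p)$, and connectivity of the loop graph makes $\phi(p)$ independent of $T'$. Only the single puncture $p$ and its image are ever tracked; the general ``two arcs share an endpoint'' predicate is never characterized.

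A second, smaller gap is your closing rigidity claim that each arc is ``determined by the set of $p_1$-arcs disjoint from it.'' This is plausible but unproven, and it is doing real work: it is what upgrades agreement on loops based at $p$ to agreement on all arcs, including those with no endpoint at $p$. The paper instead exhausts $\Sigma$ by finite-type subsurfaces filled by finite pieces of a triangulation and invokes Fact \ref{IM_enough} (a homeomorphism preserving a triangulation of a finite-type surface acts trivially on its arc graph). If you prefer your formulation you must prove the separation statement, which is not easier. Your injectivity sketch and the $P=\set{p}$ base case are fine and agree with the paper.
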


There are two special cases to keep in mind: first, if $P$ consists of a single puncture $p$, then $\arcg(\Sigma, P) = \loopg(\Sigma \cup \set{p}, p)$, so Theorem $\ref{main_arc}$ really is a generalization of Theorem $\ref{main}$. Second, if $\Sigma$ has finitely many punctures, then $P$ can contain all of them and so $\MCG^*(\Sigma, P) = \MCG^*(\Sigma)$ and we have a bona fide action of the mapping class group.

The idea of the proof of Theorem \ref{main_arc} will be to reduce it to that of Theorem \ref{main} by picking a puncture $p \in P$ and considering $\loopg(\Sigma \cup \set{p}, p)$ as an induced subgraph of $\arcg(\Sigma, P)$. The main hurdle will therefore be to prove that an automorphism of the arc graph preserves properties like ``this arc is actually a loop'' and ``these two loops are based at the same puncture''.

\subsection*{Acknowledgements}

The author would like to acknowledge the immense help of his adviser, David Futer, as well as fruitful conversations with Priyam Patel and Nicholas G. Vlamis and an insightful email from Edgar Bering.

\section{Triangulations of infinite-type surfaces}\label{general}

Irmak and McCarthy \cite{irmak} define a triangulation as a maximal set of disjoint arcs, or equivalently as a set of arcs whose complementary regions are all triangles. Hatcher \cite{hatcher}, allowing for punctures not in $P$, observes that the complementary regions of a triangulation may also include punctured monogons. In the infinite-type case triangulations are considerably more exotic, as we shall see. So we simply define a \emph{triangulation}\footnote{This somewhat misleading name is (we hope) justified by the fact that it is a straightforward extention of the usage in \cite{irmak} and \cite{hatcher}.} to be a maximal clique in $\arcg(\Sigma, P)$. The following facts are then immediate:

\begin{lemma}
	Given a set $T$ of arcs and an automorphism $f$ of $\arcg(\Sigma, P)$, $T$ is a triangulation if and only if $f(T)$ is a triangulation.
\end{lemma}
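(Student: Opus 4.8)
The plan is to treat this as a purely graph-theoretic statement. Since a triangulation has just been defined as a maximal clique in $\arcg(\Sigma, P)$, the lemma is exactly the standard fact that a graph automorphism carries maximal cliques to maximal cliques. The proof should be immediate from the definitions, so the only real task is to unwind them carefully and make sure maximality—not just the clique property—is preserved.

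First I would recall that $f$, being an automorphism of $\arcg(\Sigma, P)$, is a bijection of the vertex set (the arcs relative to $P$) that preserves adjacency in both directions: two arcs $a, b$ admit disjoint representatives if and only if $f(a), f(b)$ do. Since a clique is just a set of pairwise adjacent vertices, $f$ carries any clique to a clique. Applying this to $f$ shows that if $T$ is a clique then so is $f(T)$; applying it to the automorphism $f^{-1}$ gives the converse implication.

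Next I would upgrade this from cliques to maximal cliques. Suppose $T$ is a triangulation but $f(T)$ fails to be maximal, so there is an arc $a \notin f(T)$ with $f(T) \cup \set{a}$ still a clique. Because $f^{-1}$ is also an automorphism and hence also preserves cliques, $T \cup \set{f^{-1}(a)}$ would be a clique strictly containing $T$, contradicting the maximality of $T$. Thus $T$ a triangulation forces $f(T)$ to be a triangulation, and the reverse implication follows by running the identical argument with $f$ replaced by $f^{-1}$ (equivalently, by applying the forward direction to the triangulation $f(T)$ together with the automorphism $f^{-1}$).

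Since every step is a direct consequence of the definition of a graph automorphism, I do not expect any genuine obstacle here; the single point worth stating explicitly is that maximality is preserved precisely because the inverse map $f^{-1}$ is again an automorphism, so the containment relation among cliques is respected in both directions.
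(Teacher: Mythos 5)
Your proof is correct and matches the paper's intent: the paper states this lemma as ``immediate'' from the definition of a triangulation as a maximal clique, and your argument simply spells out the standard fact that graph automorphisms preserve maximal cliques, using $f^{-1}$ to handle maximality and the converse direction. Nothing is missing.
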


\begin{lemma}\label{extend}
	Any set of disjoint arcs can be extended to a triangulation.
\end{lemma}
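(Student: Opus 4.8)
The plan is to invoke Zorn's lemma, since a triangulation is by definition simply a maximal clique in $\arcg(\Sigma, P)$, and the statement is therefore the purely order-theoretic fact that every clique extends to a maximal one. Let $S$ be the given set of pairwise disjoint arcs. By definition of the arc graph, $S$ is a clique: each pair of its members has disjoint representatives and is thus joined by an edge. Consider the collection $\mc{C}$ of all cliques in $\arcg(\Sigma, P)$ that contain $S$, partially ordered by inclusion. This collection is nonempty, as $S \in \mc{C}$.

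First I would check that $\mc{C}$ satisfies the hypothesis of Zorn's lemma, namely that every chain has an upper bound. Given a chain $\{C_\alpha\}$ in $\mc{C}$, let $C = \bigcup_\alpha C_\alpha$. I claim $C$ is again a clique: any two distinct arcs $a, b \in C$ lie in members $C_\alpha$ and $C_\beta$ respectively, and since the $C_\alpha$ are totally ordered by inclusion, both lie in the larger of these two, which is a clique; hence $a$ and $b$ are adjacent. As $C$ evidently contains $S$, it belongs to $\mc{C}$ and is an upper bound for the chain.

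Zorn's lemma then furnishes a maximal element $T \in \mc{C}$. By construction $T$ is a clique containing $S$, and maximality within $\mc{C}$ forces $T$ to be a maximal clique outright: any clique properly containing $T$ would also contain $S$, hence lie in $\mc{C}$, contradicting the maximality of $T$. Thus $T$ is a triangulation extending $S$, as required.

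The only step requiring any attention is the verification that the union of a chain of cliques is again a clique, and this is immediate from the total ordering. It is worth emphasizing that, because we have defined a triangulation combinatorially as a maximal clique rather than as a family of arcs admitting simultaneous pairwise-disjoint representatives, the argument never needs to realize infinitely many arcs disjointly on $\Sigma$ at once; this is precisely the genuine subtlety that the combinatorial definition is designed to sidestep, and it is what makes an otherwise delicate geometric extension problem reduce to a one-line application of Zorn's lemma.
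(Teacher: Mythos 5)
Your proof is correct and is exactly the paper's approach: the paper's entire proof is the single sentence ``This follows from Zorn's Lemma,'' and you have simply written out that application in full (cliques containing the given set, ordered by inclusion, with unions of chains again cliques). Your closing observation that the purely combinatorial definition of a triangulation as a maximal clique is what makes this a one-line order-theoretic fact is accurate and consistent with the paper's setup in Section \ref{general}.
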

\begin{proof}
	This follows from Zorn's Lemma.
\end{proof}

A finite-type surface admits countably many triangulations; each triangulation has the same number of loops depending only on the Euler characteristic of the surface; and any two triangulations are connected by a finite sequence of \emph{elementary moves}, in which an arc $\alpha$ is removed from the triangulation and replaced with $\beta$, where the geometric intersection number $\iota(\alpha, \beta) = 1$ \cite{hatcher}. An infinite-type surface, on the other hand, admits uncountably many triangulations by application of Lemma \ref{extend}, each with countably many arcs, and so most pairs of triangulations will not be connected by elementary moves.

The topology of a triangulation is also interesting in the infinite-type setting: consider a finite union $C$ of small circles centered at each $p \in P$ that intersects each arc finitely many times. By compactness, there must be points on $C$ that are the limits of points on distinct arcs of the triangulation. A priori there is no reason to assume these limit points are contained in the triangulation at all, and so the triangulation may not be closed---in fact, we conjecture that it never is.

These and other questions about the nature of triangulations in general will not be studied further in this paper. However, they suggest possible future areas of research, and motivate the construction, in the next section, of a special triangulation to overcome the potential pitfalls of an arbitrary one.

\section{Building a useful triangulation}\label{useful}

For Sections \ref{useful} and \ref{constructing} we fix an infinite-type surface $\Sigma$, a basepoint $p$, and an automorphism $f \colon \loopg(\Sigma, p) \to \loopg(\Sigma, p)$.

Our goal in this section is to build a special triangulation, $\mc{T}$, containing loops that will allow us to break $\Sigma$ down nicely into compact subsurfaces, with $\mc{T}$ restricting to a triangulation on each such subsurface. We will start by building an embedded tree in $\mc{S} \subseteq \Sigma$, called the \emph{skeleton} of $\Sigma$, that has one infinite branch for each end of $\Sigma$.

Fix a pants decomposition $\set{\Pi_0, \Pi_1, \dotsc}$ of $\Sigma$, with $p \in \Pi_0$ and such that $\bigcup_{i=0}^n \Pi_i$ is connected for each $n$. Note that, since $\Sigma$ may have punctures, the holes in a pair of pants may all be boundary components connected to other pairs of pants, or one or two of them may be punctures. For the purpose of constructing the skeleton $\mc{S}$ we distinguish between four types of pairs of pants: namely, for each $\Pi_i$ we can ask how many of its boundary components connect to some $\Pi_j$ for $j < i$. In our inductive definition of $\mc{S}$, we will refer to such a boundary component as \emph{already connected}.

\begin{figure}
	\includegraphics[width=\textwidth]{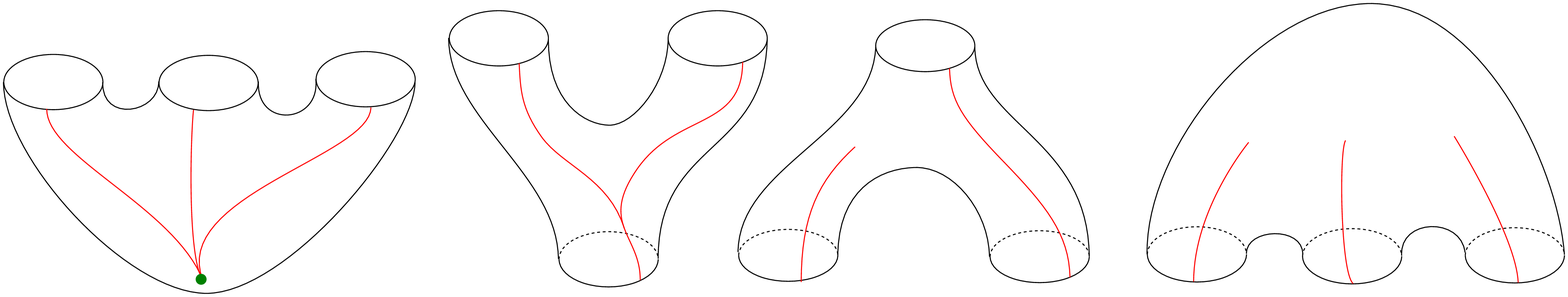}
	\caption{The construction of the skeleton on each type of pair of pants $\Pi_i$. The green dot is the basepoint $p$, and the red curves are pieces of the skeleton. \label{skeleton}}
\end{figure}

In Figure \ref{skeleton} we show how to draw the skeleton on each of the four types of pairs of pants; the first type, with no boundary components already connected, occurs only for $\Pi_0$. Note that by construction $\mc{S}$ is indeed a tree, and its infinite based rays are in bijection with the ends of $\Sigma$; there are also some paths in $\mc{S}$ that end after a finite distance, which do not therefore correspond to ends of $\Sigma$.

\begin{figure}
	\includegraphics[width=\textwidth]{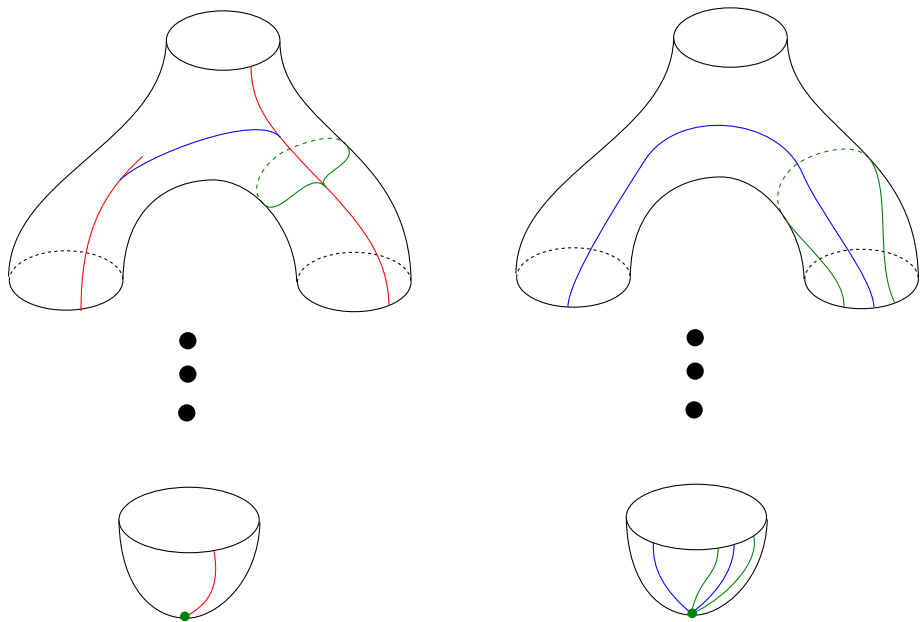}
	\caption{Each arc with endpoints on the skeleton represents a loop based at $p$. \label{arcs_to_loops}}
\end{figure}

The loops of $\mc{T}$ will be defined with reference to the skeleton in the following way: when we draw an arc from one point on the skeleton to another that does not otherwise intersect the skeleton, we are indicating the loop that starts at $p$, takes the unique nonbacktracking path to one point on the skeleton, follows the arc to the other point on the skeleton, and takes the unique nonbacktracking path from that point back to $p$; see Figure \ref{arcs_to_loops} for some examples. Note that two such arcs, if disjoint, indicate disjoint loops.

Although we have defined our loops in terms of arcs with endpoints on $\mc{S}$, this is purely a notational convenience. After this section we will consider only the loops themselves, and ignore the arcs with which they were defined.

\begin{figure}
	\includegraphics[width=\textwidth]{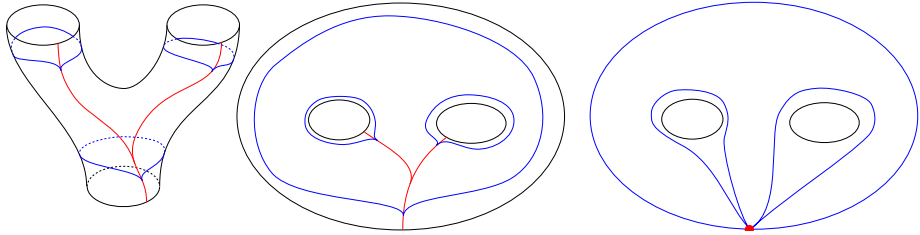}
	\caption{The skeleton and loops on a pair of pants with at most one boundary component already connected. The first and second figures are homeomorphic; in the third figure, the skeleton has been contracted to a point so that the triangle dicomposition is more clearly visible. \label{loops_1}}
\end{figure}

\begin{figure}
	\includegraphics[width=\textwidth]{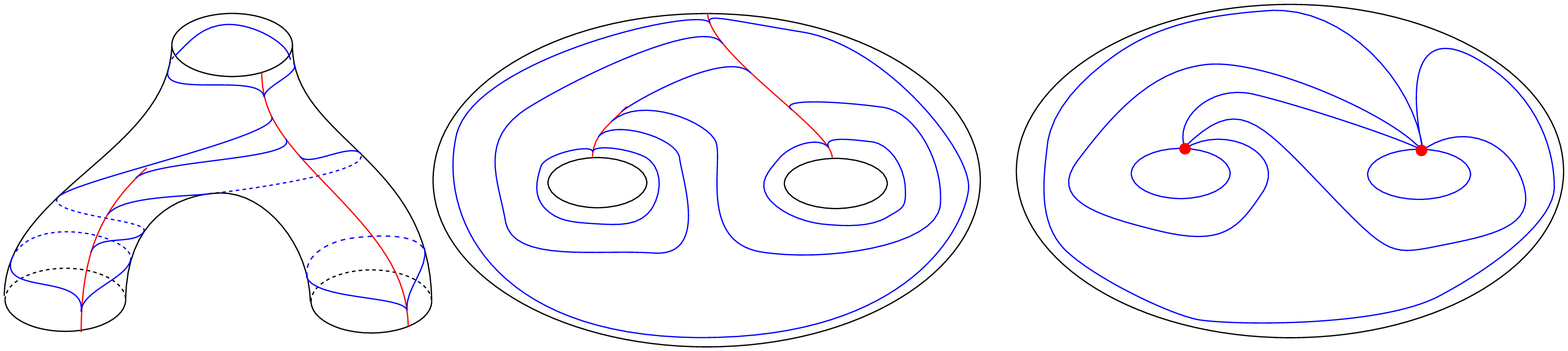}
	\caption{The skeleton and loops on a pair of pants with two boundary components already connected. As in Figure \ref{loops_1}, the first and second figures are homeomorphic; in the third figure, the skeleton has been contracted to two points. \label{loops_2}}
\end{figure}

\begin{figure}
	\includegraphics[width=\textwidth]{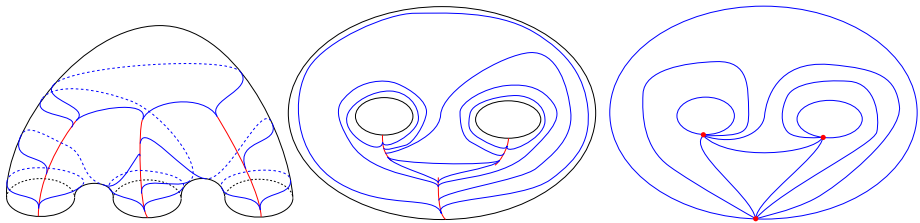}
	\caption{The skeleton and loops on a pair of pants with three boundary components already connected. As in Figure \ref{loops_1}, the first and second figures are homeomorphic; in the third figure, the skeleton has been contracted to three points. \label{loops_3}}
\end{figure}

We now draw the loops of our triangulation on each pair of pants according to the arcs pictured in Figures \ref{loops_1}--\ref{loops_3}. The skeleton in a pair of pants with zero boundary components already connected is identical to the skeleton in a pair of pants with one boundary component already connected, so both of these cases are covered in Figure \ref{loops_1}, with the other two cases covered in Figures \ref{loops_2} and \ref{loops_3} respectively. Note that some of these loops are redundant, as representatives of the same homotopy class may be drawn on more than one pair of pants.

\begin{remark}
	It turns out, somewhat surprisingly, that we will not need to know whether $\mc{T}$ is in fact a triangulation in the sense of Section \ref{general}. But the reader can easily verify that it is in fact a maximal set of loops; more importantly, $\mc{T}$ restricts to a triangulation on each compact subsurface $\bigcup_{i=0}^n \Pi_i$.
\end{remark}

\section{Constructing a homeomorphism}\label{constructing}
We first note a few facts due to Irmak and McCarthy; keep in mind that loops are simply a special case of arcs, and the loop graph a special case of the arc graph.
\begin{fact}[Propositions 3.3 and 3.4 of \cite{irmak}]\label{IM_triangle}
	The condition that three arcs bound a triangle, or that two arcs bound a degenerate triangle,\footnote{A \emph{degenerate triangle} is one where two sides are formed by the same arc.} is preserved under automorphisms of the arc graph.
\end{fact}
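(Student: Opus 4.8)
The plan is to leverage the single structural feature that an automorphism $f$ of $\arcg(\Sigma, P)$ is guaranteed to respect: the adjacency relation, where two arcs are adjacent exactly when they admit disjoint representatives. Consequently $f$ preserves every property of a finite tuple of arcs that can be expressed purely in terms of which arcs are disjoint from which. The entire task therefore reduces to exhibiting, for each of the two conditions, a \emph{combinatorial characterization} --- a description of ``$a, b, c$ bound a triangle'' (respectively ``$a, b$ bound a degenerate triangle'') that never mentions the topology of $\Sigma$ and speaks only of disjointness. Invariance under $f$ then follows formally.

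For the nondegenerate case, the key combinatorial gadget is the \emph{elementary move}, which is itself visible in the graph. First I would complete $\{a, b, c\}$ to a triangulation $T$ (a maximal clique, available by Lemma \ref{extend}) and observe that, for any arc $x \in T$, the flipped arc $x'$ is characterized as the unique arc other than $x$ that is disjoint from every element of $T \setminus \{x\}$: the complement of $T \setminus \{x\}$ is a quadrilateral, and a disk with four marked boundary points carries exactly two essential arcs. Thus the flip relation, and with it the quadrilateral surrounding each arc, is recorded by the graph. Building on this, I would characterize ``$a$ and $b$ are two sides of a common triangle'' through the way the flips of $a$ and of $b$ interact --- the side they share is distinguished from the opposite sides of each quadrilateral by the pairing that the diagonal induces --- and then say that $a, b, c$ bound a triangle when all three pairs fit together around a single complementary disk. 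For the degenerate case I would note that a degenerate triangle is precisely the situation in which the two triangles incident to an arc coincide, so that the flip region is no longer a quadrilateral; this degeneration alters the pattern of arcs disjoint from the surrounding configuration, and so is again combinatorially detectable.

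The hard part will be twofold. The first and deeper difficulty is to make the above non-circular and to prove the geometric $\iff$ combinatorial equivalence in both directions, ruling out ``accidental'' disjointness patterns that mimic a triangle without bounding a topological one; this is exactly the content of Irmak and McCarthy's Propositions 3.3 and 3.4 in the finite-type setting \cite{irmak}, and I would import their argument rather than reinvent it. The second difficulty, genuinely new here, is to transfer the characterization to infinite type. For this I would use a \emph{locality} argument: each condition involves only finitely many arcs, which fill a compact, finite-type subsurface $\Sigma_0 \subseteq \Sigma$, and any arc that witnesses or obstructs the combinatorial condition can be surgered so as to lie in $\Sigma_0$ together with a collar neighborhood. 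This confines every relevant quantifier to a finite-type piece, where the cited result applies verbatim, and then transports the conclusion back to $\arcg(\Sigma, P)$. Checking that these surgeries never alter the truth value of the condition is where the real work of the transfer lies.
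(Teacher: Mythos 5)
The paper does not reprove this statement at all: it is imported verbatim as a Fact from Irmak and McCarthy, and the only justification offered is the observation (in the paragraph after Fact \ref{IM_enough}) that their proofs of Propositions 3.3 and 3.4 ``are based exclusively on local properties of the arc graph'' and therefore nowhere use finite type. Your overall strategy---find a disjointness-only characterization, import the geometric-equals-combinatorial equivalence from \cite{irmak}, and worry separately about infinite type---is the same in spirit, but the two concrete ingredients you add both have problems.

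First, the flip gadget does not work in this setting. The claim that the complement of $T \setminus \{x\}$ is a quadrilateral, so that $x$ has a unique ``flip'' detectable as the only other arc disjoint from $T \setminus \{x\}$, presupposes that $x$ is flanked by two distinct honest triangles of $T$. That fails when $x$ bounds a punctured monogon, when $x$ is the doubled edge of a degenerate triangle (precisely the second case of the statement), and---as the paper stresses in Section \ref{general}---for infinite-type surfaces, where a triangulation is merely a maximal clique and is expected to have complementary regions that are not triangles at all. So completing $\{a,b,c\}$ to an arbitrary maximal clique via Lemma \ref{extend} and reading off quadrilaterals is not available. Second, your surgery-based transfer to infinite type is flagged as ``the real work'' but not carried out, and as framed it does not quite make sense: an automorphism of $\arcg(\Sigma,P)$ does not restrict to an automorphism of the arc graph of a finite-type subsurface $\Sigma_0$, so there is nothing on which to ``apply the cited result verbatim.'' What actually makes the Fact true in infinite type is that Irmak and McCarthy's characterizations quantify only over finitely many auxiliary arcs in a neighborhood of the configuration, and their proofs of the equivalence never invoke compactness of the ambient surface; no surgery or transfer step is needed, only an inspection of their arguments. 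If you want to give a proof rather than a citation, you should reproduce their actual local characterization rather than substitute the flip construction.
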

\begin{fact}[Propositions 3.5--3.7 of \cite{irmak}]\label{IM_adj}
	When two triangles are adjacent (i.e.\ share one or two edges) their relative orientations are preserved under automorphisms of the arc graph.
\end{fact}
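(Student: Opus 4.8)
The plan is to exploit the one structural feature available to us: an automorphism $f$ of the arc graph is, by definition, a bijection of the vertex set that preserves the adjacency relation, and adjacency is exactly disjointness. Consequently \emph{any} property of a finite configuration of arcs that can be phrased purely in the language of disjointness---that is, in terms of which arcs are or are not disjoint from which others---is automatically invariant under $f$. Both facts will follow once the relevant geometric configurations are given such purely combinatorial descriptions. Since the surface is of finite type throughout this imported setting, I may freely complete any clique of disjoint arcs to a finite ideal triangulation and reason with elementary moves.

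First I would treat Fact \ref{IM_triangle} as the base case. Given three pairwise disjoint arcs $a,b,c$ (a $3$-clique), I want a disjointness-only condition equivalent to ``one complementary region of $a\cup b\cup c$ is a triangle.'' The geometric feature to exploit is that a triangular region is the unique type of complementary polygon admitting no essential arc in its interior, whereas any larger polygon (quadrilateral, pentagon, and so on) contains a diagonal disjoint from all of its sides. The difficulty is that ``interior'' is not itself a graph-theoretic notion, so the usable characterization must instead compare the pairwise common neighborhoods $N(a)\cap N(b)$, $N(b)\cap N(c)$, $N(a)\cap N(c)$ and detect the triangle through the incidence pattern they force; establishing this equivalence is a minimal-position, bigon-free argument and is precisely the content of Irmak and McCarthy's Propositions 3.3--3.4. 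The degenerate triangle, in which a single arc supplies two of the three sides, is detected by the same interior-emptiness criterion applied to the two-arc configuration and its self-glued model.

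The main statement, Fact \ref{IM_adj}, builds on this. Two triangles $T_1=(a,b,c)$ and $T_2=(a,d,e)$ sharing the edge $a$ assemble into a quadrilateral region $Q$ with sides $b,c,d,e$ and diagonal $a$, and all of these are pinned down combinatorially by their prescribed disjointness relations. The opposite diagonal is the \emph{flip} $a'$ of $a$, characterized as the unique arc $a'\neq a$ that is disjoint from each of $b,c,d,e$ yet not disjoint from $a$ (non-disjointness being read off as non-adjacency in the graph). Having located $Q$, $a$, and $a'$, the relative orientation of $T_1$ and $T_2$ is encoded in the cyclic order in which the sides $b,c,d,e$ attach around $Q$. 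I would then show that $f$, which must carry the configuration $(Q,a,a')$ to another of the same combinatorial type, cannot independently reverse this rotation at one shared edge while preserving it at an adjacent one; the case of two triangles sharing two edges is the analogous degenerate version, and propagating this single global choice of sense is exactly Propositions 3.5--3.7.

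I expect the orientation step to be the main obstacle. Disjointness is a symmetric relation and carries no intrinsic handedness, so relative orientation cannot be read off from any bounded local pattern in isolation: the reflection of a triangle-pair across its shared diagonal satisfies all of the same disjointness relations. The resolution is that orientation is not local data but is forced to be globally coherent---once $f$'s effect on the rotational structure of one triangle is fixed, the overlapping combinatorial patterns of edge-adjacent triangles, which ultimately tile the surface, leave no freedom to flip the sense at some triangles but not others. Making this rigidity precise, by exhibiting a detectable disjointness relation among arcs spanning two adjacent quadrilaterals that an incoherent orientation assignment would violate, is the crux, and it is what Irmak and McCarthy's sequence of propositions establishes in the finite-type setting we are importing.
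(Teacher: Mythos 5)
First, note that the paper does not prove this statement at all: it is imported verbatim from \cite{irmak}, and the only thing the paper adds is the observation---essential to its whole strategy---that Irmak and McCarthy's proofs of Facts \ref{IM_triangle} and \ref{IM_adj} ``are based exclusively on local properties of the arc graph'' and therefore remain valid on an infinite-type surface. Your proposal runs directly against this point. You allow yourself to ``complete any clique of disjoint arcs to a finite ideal triangulation and reason with elementary moves,'' and your final paragraph resolves the orientation question by a global rigidity argument in which edge-adjacent triangles ``ultimately tile the surface.'' A proof of that shape would not transfer to the setting where the paper actually needs the fact: on an infinite-type surface a maximal clique is infinite, need not have only triangular complementary regions, and is generally not connected to a reference triangulation by finitely many elementary moves---Section \ref{general} of the paper is devoted to exactly these pathologies.

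Second, the claim on which your global detour rests---that the relative orientation of two adjacent triangles ``cannot be read off from any bounded local pattern''---is false, and you have already assembled the tool that refutes it. Let $T_1=(a,b,c)$ and $T_2=(a,d,e)$ share the edge $a$, forming a square with sides $b,c,d,e$ and diagonal $a$, and let $a'$ be the opposite diagonal, which you correctly characterize by its disjointness pattern. The relative orientation is the datum of which side of $T_2$ shares an endpoint of $a$ with which side of $T_1$, and this is visible in which pairs among $b,c,d,e$ bound a triangle together with $a'$: in one gluing $a'$ forms triangles with $\{b,e\}$ and $\{c,d\}$, in the other with $\{b,d\}$ and $\{c,e\}$. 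Since $a'$ is pinned down by adjacency relations and ``three arcs bound a triangle'' is preserved by Fact \ref{IM_triangle}, the relative orientation is preserved by a purely local argument involving six arcs (with separate bookkeeping for the degenerate cases in which sides coincide or the two triangles share two edges, which is why \cite{irmak} needs three propositions here). What genuinely cannot be detected, locally or globally, is the absolute orientation---which is precisely why the theorem concerns the \emph{extended} mapping class group; conflating that with relative orientation is what sent your argument off course.
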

\begin{fact}[Proposition 3.8 of \cite{irmak}]\label{IM_enough}
	If a homeomorphism of a finite-type surface preserves some triangulation up to isotopy, then it induces the identity on the arc graph of that surface.
\end{fact}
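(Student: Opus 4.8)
The plan is to invoke the Alexander method: a homeomorphism that fixes each member of a filling system of arcs (with the right combinatorial data) is isotopic to the identity, and an isotopically trivial homeomorphism visibly induces the identity on the arc graph. Since a triangulation is in particular a filling system---its complementary regions are triangles and punctured monogons, all of which are disks or once-punctured disks---this framework should apply directly. Throughout I read the hypothesis ``preserves the triangulation up to isotopy'' as $h(\alpha) \simeq \alpha$ for \emph{each} arc $\alpha$ of the triangulation $T$ (not merely $h(T) = T$ as a set), since that is what forces the conclusion.

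First I would upgrade ``fixes $T$ up to isotopy'' to ``fixes $T$ on the nose.'' Using the standard simultaneous-isotopy machinery for a maximal disjoint family of arcs, I would isotope $h$ to a homeomorphism $h'$ with $h'(\alpha) = \alpha$ setwise for every $\alpha \in T$, and fixing the punctures that serve as endpoints of arcs of $T$. Because each complementary triangle is determined by its three boundary arcs, $h'$ then fixes each such triangle setwise.

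Next I would pin down the orientation behaviour, which also disposes of the orientation-reversing case for free. On a triangle $\Delta$ with sides $\alpha, \beta, \gamma$ and ideal vertices $a, b, c$, the map $h'$ fixes each side setwise and fixes $a, b, c$; hence on the boundary circle $\partial \Delta$ it fixes three points and restricts to an endpoint-fixing self-homeomorphism of each of the three arcs between them, so it is orientation-preserving on $\partial\Delta$ and therefore $h'|_\Delta$ is an orientation-preserving homeomorphism of a disk. As the triangles cover $\Sigma$, the map $h'$ is globally orientation-preserving; in particular an a priori orientation-reversing $h$ cannot fix every arc of a triangulation, so the statement holds vacuously in that case.

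Finally I would apply Alexander's lemma triangle by triangle. After a further isotopy making $h'$ fix each arc of $T$ pointwise (possible since it already fixes the endpoints and restricts to an endpoint-fixing self-homeomorphism of each arc), $h'$ restricts on each triangle to a homeomorphism of a disk fixing the boundary pointwise, hence is isotopic rel boundary to the identity there; the punctured-monogon regions are handled by the same lemma on a once-punctured disk, using that the puncture is fixed. These isotopies agree along shared edges and so glue to an isotopy from $h'$, and thus from $h$, to $\id_\Sigma$. An isotopically trivial homeomorphism fixes every arc up to isotopy, so $h$ induces the identity on the arc graph. I expect the main obstacle to be the bookkeeping in the simultaneous-isotopy step---realizing all arcs of $T$ fixed at once and checking that the per-triangle isotopies are compatible along $\bigcup_{\alpha \in T} \alpha$---rather than any single conceptual difficulty.
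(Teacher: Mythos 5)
This is a \emph{Fact} cited from Irmak--McCarthy; the paper gives no proof of its own, so your attempt can only be judged on its merits---and it has a genuine gap. The route you take passes through the intermediate claim that $h$ is isotopic to $\id_\Sigma$, which is strictly stronger than the stated conclusion and is in fact false. The standard counterexample is the once-punctured torus with the triangulation by the arcs of slope $0$, $1$, $\infty$: the hyperelliptic involution fixes every essential arc up to isotopy (it acts by $-I$ on homology, hence preserves slopes), so it preserves this triangulation arc-by-arc, and it does induce the identity on the arc graph---but it is not isotopic to the identity. This is precisely why Irmak and McCarthy state the conclusion as ``induces the identity on the arc graph'' rather than ``is isotopic to the identity.''

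The place your argument breaks is the step ``each complementary triangle is determined by its three boundary arcs, so $h'$ fixes each triangle setwise.'' In the punctured-torus example both complementary triangles have the same three sides, and the involution swaps them; more generally the triangulations used in this paper contain degenerate triangles (two sides on one arc) and punctured monogons, for which the same identification fails. Relatedly, $h'(\alpha)=\alpha$ setwise does not force $h'$ to fix the ideal vertices of $\alpha$: an arc with both ends at one puncture can be mapped to itself with its ends swapped. Both of these defeat your orientation argument on $\partial\Delta$ and the triangle-by-triangle application of Alexander's lemma. A correct proof has to aim directly at the weaker conclusion: after isotoping $h$ to fix the $1$-complex $\bigcup T$ setwise, one shows that the isotopy class of an \emph{arbitrary} essential arc is determined by combinatorial data relative to $T$ (e.g.\ its normal coordinates in the ideal triangulation) and that this data is preserved even when $h$ permutes complementary triangles or reverses arcs. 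That extra argument is the actual content of Proposition 3.8, and it is missing here.
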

The proofs of the first two facts do not depend on the surface being finite-type; they are based exclusively on local properties of the arc graph. Since our surface may have punctures, some loops may bound punctured monogons as well as triangles, and this property will also be preserved:
\begin{lemma}\label{monogon}
	The condition that a loop bounds a punctured monogon is preserved under automorphisms of the arc graph.
\end{lemma}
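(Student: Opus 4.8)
The plan is to characterize, purely within the arc graph, those arcs that are loops bounding a punctured monogon, using configurations that Fact \ref{IM_triangle} already shows to be automorphism-invariant. The geometric observation driving this is that the interior of a punctured monogon contains no puncture of $P$ and meets $P$ only at the basepoint of its boundary loop; consequently every arc that can be drawn inside it is trivial or isotopic to the boundary, so the boundary loop bounds neither a triangle nor a degenerate triangle on its monogon side. A loop that instead encloses a puncture of $P$ is the non-repeated side of the self-folded triangle formed with the radius to that puncture, and a loop (or any arc) bounding a larger region admits genuine triangles on both of its sides.

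Accordingly, I would prove that $\alpha$ is a loop bounding a punctured monogon if and only if the following three conditions hold:
\begin{enumerate}
	\item there are arcs $\beta, \gamma$ such that $\set{\alpha, \beta, \gamma}$ bounds a triangle;
	\item there is no arc $\delta$ such that $\alpha$ and $\delta$ bound a degenerate triangle;
	\item there do not exist four arcs $\beta_1, \gamma_1, \beta_2, \gamma_2$, distinct from one another and pairwise disjoint, each disjoint from $\alpha$, such that $\set{\alpha, \beta_1, \gamma_1}$ and $\set{\alpha, \beta_2, \gamma_2}$ each bound a triangle.
\end{enumerate}
Condition (2) eliminates loops surrounding a puncture of $P$ (and radii, which are not loops in any case), while condition (3) eliminates any arc bounding triangles on both of its sides: two triangles that share only the edge $\alpha$ and are otherwise disjoint glue to an embedded quadrilateral with diagonal $\alpha$ and therefore lie on opposite sides of $\alpha$, so such a quadruple exists exactly when $\alpha$ has triangles on both sides. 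A monogon loop has triangles only on its outer side, so conditions (1)--(3) all hold; every other arc is excluded by one of them.

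Granting the equivalence, invariance is immediate and requires only Fact \ref{IM_triangle}: an automorphism carries triples that bound triangles to triples that bound triangles and pairs that bound degenerate triangles to pairs that bound degenerate triangles, and it preserves disjointness by definition, so it preserves each of (1)--(3) verbatim. I expect the substance of the proof to lie in the geometric equivalence rather than in this invariance. The delicate direction is showing that every arc other than a monogon loop fails one of the conditions; here I cannot invoke the finite-type slogan that each edge borders two triangles, since Section \ref{general} warns that a triangulation may have complementary regions that are not triangles. Instead I would argue locally, showing that into any side of $\alpha$ that is not a punctured monogon one can embed an honest triangle $\set{\alpha, \beta, \gamma}$---so that a non-monogon loop acquires either a degenerate triangle (when the side is a once-punctured disk over a puncture of $P$) or triangles on both sides (otherwise), and a non-loop arc likewise acquires triangles on both sides. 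Because all of these witnesses are finite local configurations, the pathologies of global triangulations in the infinite-type setting never enter.
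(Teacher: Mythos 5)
Your proposal is correct and is essentially the paper's argument: the paper characterizes a loop bounding a punctured monogon by the impossibility of drawing two disjoint triangles adjacent to it (since each side that is not a punctured monogon has at least two punctures or a handle and hence supports a triangle), which is exactly your condition (3), and then invokes Fact \ref{IM_triangle}. Your conditions (1) and (2) are additional safeguards for the general arc-graph setting---ruling out, via degenerate triangles, loops whose monogon surrounds a puncture of $P$---which the paper silently omits because it only ever applies the lemma in the loop-graph setting, where those conditions are automatic.
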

\begin{proof}
	If a nontrivial loop $\lambda$ does not bound a punctured monogon, then $\Sigma \setminus \lambda$ has either at least two punctures or at least one handle in each component.\footnote{There may be one or two components, depending on whether $\lambda$ is separating.} In either case, we can draw two disjoint triangles adjacent to $\lambda$, which we cannot do if $\lambda$ does bound a punctured monogon. Since the property of three arcs bounding a triangle is preserved by Fact \ref{IM_triangle}, so is the property of bounding a punctured monogon.
\end{proof}

We will first construct an exhaustion of $\Sigma$ by finite-type surfaces $\Sigma_n$, with corresponding homeomoprhisms $\phi_n$ from $\Sigma_n$ to an appropriate subsurface of $\Sigma$. The natural choice is to let the subsurface $\Sigma_n \subseteq \Sigma$ be the union $\bigcup_{i=0}^n \Pi_n$ of the first $n$ pairs of pants. Note that by our choice of pants decomposition, $\Sigma_n$ is connected and contains $p$.

However, this definition is somewhat unhelpful for the purpose of constructing $\phi_n$, because there is no obvious choice for the image $\phi_n(\Sigma_n)$---after all, $f$ will not in general preserve our pants decomposition. But $\Sigma_n$ has a useful alternate definition. If we let $\mc{T}_n$ be the set of loops in $\mc{T}$ supported on $\Sigma_n$, then $\Sigma_n$ is also the largest subsurface of $\Sigma$ (up to isotopy) filled by the loops of $\mc{T}_n$.

We now have a natural choice for the image of $\phi_n$: since $\Sigma_n$ is the largest subsurface filled by $\mc{T}_n$, $\phi_n(\Sigma_n)$ should be the largest subsurface filled by $\set{f(\lambda) \mid \lambda \in \mc{T}_n}$. We can in fact construct such a map:

\begin{lemma}\label{partial_maps}
	For $n \in \nat$, there exists a map $\phi_n : \Sigma_n \to \Sigma$, a homeomorphism onto its image, such that $\phi_n(\lambda) = f(\lambda)$ for each $\lambda \in \loopg(\Sigma, p)$ supported on $\Sigma_n$. In addition, these homeomorphisms are compatible: that is, after applying some isotopy to $\phi_{n+1}$, we have $\left. \phi_{n+1}\right|_{\Sigma_n} = \phi_n$.
\end{lemma}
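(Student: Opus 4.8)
The plan is to realize each $\phi_n$ by reducing to the finite-type theorem of Irmak and McCarthy applied to the compact surface $\Sigma_n$. First I would record that $\mc{T}_n$ is a finite triangulation of $\Sigma_n$, and that since $f$ preserves adjacency---hence disjointness of loops away from $p$---its image $f(\mc{T}_n)$ is again a finite clique; let $\Sigma_n'$ be the subsurface it fills. The combinatorial data attached to a triangulation is transported by $f$: three loops bounding a triangle (Fact \ref{IM_triangle}), adjacency and relative orientation of triangles (Fact \ref{IM_adj}), and bounding a punctured monogon (Lemma \ref{monogon}). Thus $\lambda \mapsto f(\lambda)$ is an isomorphism of triangulated surfaces from $(\Sigma_n, \mc{T}_n)$ to $(\Sigma_n', f(\mc{T}_n))$. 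Step (3) of Irmak and McCarthy's program, which builds a homeomorphism triangle-by-triangle from exactly this data and is insensitive to the ambient surface, then produces a homeomorphism $\phi_n \colon \Sigma_n \to \Sigma_n'$ fixing $p$ with $\phi_n(\lambda) = f(\lambda)$ for every $\lambda \in \mc{T}_n$.

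It remains to promote this agreement from $\mc{T}_n$ to every loop supported on $\Sigma_n$. For this I would first show that $f$ restricts to an isomorphism $\loopg(\Sigma_n, p) \to \loopg(\Sigma_n', p)$ of the induced subgraphs spanned by loops supported on $\Sigma_n$ and $\Sigma_n'$ respectively. This is the step I expect to be the main obstacle: it demands a purely graph-theoretic characterization of ``$\mu$ is supported on the subsurface filled by $\mc{T}_n$,'' phrased only through the basepoint-adjacency relation and the clique $\mc{T}_n$ (for instance via iterated links, so that $\mu$ lies in $\Sigma_n$ exactly when it meets every common neighbour of $\mc{T}_n$ only at $p$), since only such a description is automatically carried over by $f$. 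Granting this, the composite $g_n := (\phi_n)_*^{-1} \circ f$ is a genuine automorphism of the finite-type loop graph $\loopg(\Sigma_n, p)$ fixing every element of $\mc{T}_n$. By the finite-type case of the theorem, $g_n$ is induced by a homeomorphism $h$ of $\Sigma_n$ fixing $p$; since $h$ fixes each loop of $\mc{T}_n$ up to isotopy, Fact \ref{IM_enough} forces $g_n = \id$, that is, $\phi_n(\mu) = f(\mu)$ for every loop $\mu$ supported on $\Sigma_n$.

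Finally I would establish compatibility. Because $\mc{T}_n \subseteq \mc{T}_{n+1}$, the surface $\Sigma_n'$ filled by $f(\mc{T}_n)$ sits inside $\Sigma_{n+1}'$, and the restriction $\phi_{n+1}|_{\Sigma_n}$ is again a homeomorphism onto $\Sigma_n'$ realizing $f$ on $\loopg(\Sigma_n, p)$. Hence $\phi_n^{-1} \circ (\phi_{n+1}|_{\Sigma_n})$ is a self-homeomorphism of $\Sigma_n$ fixing each loop of the filling system $\mc{T}_n$ up to isotopy, so by the Alexander method (equivalently Fact \ref{IM_enough}) it is isotopic to the identity. Applying the isotopy extension theorem inside $\Sigma_{n+1}$ then lets me adjust $\phi_{n+1}$ so that $\phi_{n+1}|_{\Sigma_n} = \phi_n$ exactly, and because isotopies preserve isotopy classes of loops this does not disturb the identity $\phi_{n+1}(\lambda) = f(\lambda)$. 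This yields the compatible family asserted by the lemma.
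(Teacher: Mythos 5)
Your overall architecture is the paper's: transport the combinatorial data of $\mc{T}_n$ through $f$ using Facts \ref{IM_triangle} and \ref{IM_adj} and Lemma \ref{monogon}, assemble a homeomorphism region by region, upgrade agreement from $\mc{T}_n$ to all loops supported on $\Sigma_n$, and get compatibility because the construction is canonical up to isotopies of complementary regions. But your middle step has a genuine gap, which you yourself flag and then assume away. You want to know that $f$ restricts to an isomorphism between the induced subgraphs of loops supported on $\Sigma_n$ and on $\Sigma_n'$ \emph{before} you can run the finite-type theorem, and the characterization you float---that $\mu$ is supported on $\Sigma_n$ exactly when it is adjacent to every common neighbour of $\mc{T}_n$---is not justified and looks false: a loop disjoint from all of $\mc{T}_n$ can exit $\Sigma_n$ through the annular complementary region along $\partial\Sigma_n$ (whose closure contains $p$), and such a loop can essentially intersect loops that are supported on $\Sigma_n$. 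The detour is also unnecessary. The way Fact \ref{IM_enough} is actually deployed is combinatorial: agreement of $f$ and $\phi_n$ on the triangulation $\mc{T}_n$ propagates along elementary moves, since the arc produced by an elementary move is determined by disjointness and triangle data, both of which $f$ preserves; every loop supported on $\Sigma_n$ appears in some triangulation reachable from $\mc{T}_n$ by finitely many elementary moves, so $f(\mu)=\phi_n(\mu)$ loop by loop. That $f$ sends loops supported on $\Sigma_n$ to loops supported on $\Omega_n$ is then a \emph{conclusion}, not a hypothesis you must establish first.

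A second omission: your claim that $\lambda\mapsto f(\lambda)$ is ``an isomorphism of triangulated surfaces'' onto the subsurface filled by $f(\mc{T}_n)$ silently skips the loops of $\mc{T}_n$ parallel to $\partial\Sigma_n$. The facts you cite control triangles and punctured monogons, but say nothing about the complementary regions of $f(\mc{T}_n)$ on the far side of the images of these boundary-parallel loops; if one of those regions were a disk, the filled subsurface would not match $\Sigma_n$ and the gluing would not close up correctly. This is precisely where the paper uses the ambient triangulation $\mc{T}\supseteq\mc{T}_n$ built in Section \ref{useful}: a boundary-parallel $\lambda\in\mc{T}_n$ bounds a triangle of $\mc{T}$ with a side outside $\mc{T}_n$, hence $f(\lambda)$ bounds a triangle of $f(\mc{T})$ with a side outside $f(\mc{T}_n)$, so $f(\lambda)$ is boundary-parallel in $\Omega_n$ and the homeomorphism extends over a collar. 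Your compatibility paragraph, by contrast, is sound and if anything more careful than the paper's one-line justification.
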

\begin{proof}
	The image of $\phi_n$ will be the largest subsurface filled by $f(\mc{T}_n) = \set{f(\lambda) \mid \lambda \in \mc{T}_n}$; call this subsurface $\Omega_n$. Whenever three loops $\lambda_1, \lambda_2, \lambda_3 \in \mc{T}_n$ bound a triangle in $\Sigma$, so do their images $f(\lambda_1), f(\lambda_2), f(\lambda_3)$ by Fact \ref{IM_triangle}. The latter triangle is thus included in $\Omega_n$ and can be the image of the former. When two such triangles are adjacent, their orientations are preserved by Fact \ref{IM_adj} and so the homeomorphisms on adjacent triangles can be stitched together after applying some isotopy.

	When a loop $\lambda \in \mc{T}_n$ bounds a punctured monogon in $\Sigma$, so does $f(\lambda)$ by Lemma \ref{monogon}. Then by construction the punctured monogon bounded by $f(\lambda)$ is contained in $\Omega_n$, and the two punctured monogons are homeomorphic. This homeomorphism can be stitched to those above along $\lambda$ and $f(\lambda)$.

	When a loop $\lambda \in \mc{T}_n$ is parallel to the boundary of $\Sigma_n$, it must bound a triangle in $\mc{T}$ with at least one side not in $\mc{T}_n$. By Fact \ref{IM_triangle}, $f(\lambda)$ therefore bounds a triangle in $f(\mc{T})$ with at least one side not in $f(\mc{T}_n)$ and so this triangle is not included in $\Omega_n$. Thus $f(\lambda)$ is parallel to the boundary of $\Omega_n$, and we can extend our homeomorphism to a tubular neighborhood of $\lambda$.
	
	Since $\Sigma_n$ is made up of tubular neighborhoods of loops in $\mc{T}_n$, triangles bounded by loops in $\mc{T}_n$, and punctured monogons bounded by loops in $\mc{T}_n$---and likewise for $\Omega_n$ and loops in $f(\mc{T}_n)$---this algorithm gives a homeomorphism $\phi_n : \Sigma_n \to \Omega_n$. Since $\phi_n(\lambda) = f(\lambda)$ for each $\lambda \in \mc{T}_n$, which is a triangulation of $\Sigma_n$, it follows by Fact \ref{IM_enough} that $\phi_n(\lambda) = f(\lambda)$ for every $\lambda \in \loopg(\Sigma, p)$ supported on $\Sigma_n$.
	
	By construction $\Omega_n \subseteq \Omega_{n+1}$, and the only choices we made in defining our homeomorphisms were isotopies on the interiors of triangles and punctured monogons. Thus after an isotopy, $\phi_n$ and $\phi_{n+1}$ agree on $\Sigma_n$.
\end{proof}

To prove Theorem \ref{main} we need to combine these partial maps $\phi_n$, and we also need to prove injectivity. The following lemma achieves the latter result.

\begin{lemma}\label{homeo_trivial}
	If a homeomorphism $\phi : \Sigma \to \Sigma$ induces the identity automorphism of $\Aut(\loopg(\Sigma, p))$ then $\phi$ is isotopic to the identity.
\end{lemma}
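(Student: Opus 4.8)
The plan is to show that if a homeomorphism $\phi$ fixes every vertex of the loop graph (up to isotopy), then $\phi$ is isotopic to the identity. The natural strategy is to exploit the rigidity we already have on compact pieces: by Lemma \ref{partial_maps}, applied with $f = \id$, we obtain partial maps, and the key leverage is Fact \ref{IM_enough}, which says a homeomorphism fixing a triangulation of a finite-type surface induces the identity on its arc graph. Here I want the converse flavor—that fixing enough loops forces the map to be isotopically trivial—so I would reduce to a statement about an exhaustion by finite-type subsurfaces and the Alexander method.

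**Main steps.** First I would fix the exhaustion $\Sigma_n = \bigcup_{i=0}^n \Pi_i$ together with the triangulation $\mc{T}$ from Section \ref{useful}, so that $\mc{T}_n$ restricts to a genuine triangulation of each $\Sigma_n$. Since $\phi$ induces the identity on $\loopg(\Sigma,p)$, we have $\phi(\lambda)$ isotopic to $\lambda$ for every loop $\lambda$; in particular $\phi$ fixes each loop of $\mc{T}_n$ up to isotopy. Second, I would invoke the Alexander method for surfaces with marked points: a homeomorphism fixing a filling system of arcs/curves up to isotopy is isotopic to a homeomorphism fixing them pointwise, and fixing a triangulation pointwise on a finite-type surface forces it to be isotopic to the identity on that subsurface. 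The subtlety is that $\phi$ need not preserve $\Sigma_n$, so I would instead argue on the level of isotopy classes: because $\phi$ fixes the isotopy class of every loop, and these loops fill, the induced action on $\pi_1(\Sigma,p)$ (or on the set of isotopy classes of loops, which detect the topology finely enough) is trivial. Third, I would upgrade a compatible sequence of isotopies $\phi|_{\Sigma_n} \simeq \id$ to a global isotopy, using the compatibility already furnished by Lemma \ref{partial_maps} so that the isotopies can be chosen to agree on the nested pieces and glued in the limit.

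**The main obstacle** will be the passage from local triviality on each $\Sigma_n$ to a genuine global isotopy of $\phi$ to the identity on all of $\Sigma$. Infinite-type surfaces do not enjoy automatic compactness, so isotopies must be controlled near the ends: one must ensure that the isotopies $h_n$ trivializing $\phi$ on $\Sigma_n$ can be chosen to stabilize, i.e.\ so that $h_{n+1}$ agrees with $h_n$ on $\Sigma_{n-1}$, and that the infinite concatenation converges to a continuous ambient isotopy. I expect this to require an Alexander-type uniqueness argument applied compatibly across the exhaustion, leaning on the nesting $\Omega_n \subseteq \Omega_{n+1}$ and the fact that the only freedom in constructing the trivializing isotopies lies in the interiors of triangles and punctured monogons, exactly as in the compatibility clause of Lemma \ref{partial_maps}.

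**A cleaner alternative** that I would attempt first: since $\phi$ fixes every isotopy class of loop based at $p$, and loops based at $p$ are in bijection with a distinguished subset of $\pi_1(\Sigma, p)$ (namely simple classes), I would try to show directly that $\phi_*$ is the identity on $\pi_1(\Sigma, p)$. Simple loops normally generate enough of the fundamental group that fixing all of them forces $\phi_*$ to be trivial, and a homeomorphism of a surface inducing the identity on $\pi_1$ (and preserving orientation data appropriately) is isotopic to the identity by a Dehn--Nielsen--Baer type argument. If this works it bypasses the exhaustion entirely, though verifying that simple based loops generate $\pi_1$ and that the Dehn--Nielsen--Baer machinery applies in the infinite-type setting is itself nontrivial, so I would keep the exhaustion argument as the robust fallback.
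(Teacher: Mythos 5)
Your plan circles the right ideas but does not close the argument, and it is far more elaborate than what the paper actually does. The paper's proof is two sentences: since $\phi$ preserves the isotopy class of every based loop, it also preserves the isotopy class of every free loop; hence $\phi$ fixes each curve of a pants decomposition of $\Sigma$ up to isotopy, and the conclusion follows by citing Corollary~1.2 of \cite{alexander}, the Alexander method for infinite-type surfaces. The entire difficulty you identify---``the passage from local triviality on each $\Sigma_n$ to a genuine global isotopy''---is precisely the content of that cited theorem, and your proposal neither proves it nor points to a reference for it. Saying you ``expect this to require an Alexander-type uniqueness argument applied compatibly across the exhaustion'' names the gap rather than filling it: the compatibility clause of Lemma~\ref{partial_maps} concerns the maps $\phi_n$ built from an automorphism $f$, not the trivializing isotopies $h_n$ of a given homeomorphism, and controlling an infinite concatenation of isotopies near the ends is exactly where a na\"ive limiting argument fails (one must rule out twisting that accumulates toward the ends, which is why the infinite-type Alexander method is a theorem and not an observation). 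Note also that the paper uses this lemma to get injectivity independently of the gluing machinery of Lemma~\ref{partial_maps}, which is part of the surjectivity argument, so importing that machinery here muddles the logical structure even if it is not literally circular.

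Your ``cleaner alternative'' has its own problems: loops are unoriented, so fixing every based loop up to isotopy only gives $\phi_*(\gamma)\in\set{\gamma,\gamma^{-1}}$ for each simple class $\gamma$, and even after arguing that an automorphism of a nonabelian surface group cannot invert every simple element, you would still need a Dehn--Nielsen--Baer theorem valid for infinite-type surfaces---again a nontrivial external input you would have to prove or cite. In short, either of your routes can likely be made to work, but only by invoking essentially the same rigidity result for infinite-type surfaces that the paper dispatches with a single citation; as written, the hardest step of the lemma is left unproved.
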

\begin{proof}
	The key insight here is that if $\phi$ preserves the isotopy class of each based loop it must also preserve the isotopy class of each free loop. So if we fix a pants decomposition of $\Sigma$, $\phi$ will preserve its boundary curves up to isotopy. Then by Corollary 1.2 of \cite{alexander}, $\phi$ is isotopic to the identity.
\end{proof}

\begin{cor}\label{bijective}
	The homomorphism $\MCG^*(\Sigma, p) \to \Aut(\loopg(\Sigma, p))$ is bijective.
\end{cor}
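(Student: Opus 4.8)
The plan is to establish injectivity and surjectivity separately, drawing on Lemma \ref{homeo_trivial} for the former and on Lemma \ref{partial_maps} for the latter.

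Injectivity is immediate. Suppose a class $[\phi] \in \MCG^*(\Sigma, p)$ lies in the kernel, so that $\phi$ induces the identity automorphism of $\loopg(\Sigma, p)$. Then Lemma \ref{homeo_trivial} shows that $\phi$ is isotopic to the identity, i.e.\ $[\phi]$ is trivial. Hence the kernel is trivial and the homomorphism is injective.

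For surjectivity, fix an automorphism $f \in \Aut(\loopg(\Sigma, p))$ and invoke Lemma \ref{partial_maps} to obtain compatible homeomorphisms $\phi_n \colon \Sigma_n \to \Omega_n$ onto subsurfaces $\Omega_n \subseteq \Sigma$ satisfying $\phi_n(\lambda) = f(\lambda)$ for every loop $\lambda$ supported on $\Sigma_n$. Since each $\phi_n$ carries based loops to based loops, it fixes $p$. The goal is to assemble the $\phi_n$ into a single map $\phi = \varinjlim \phi_n$ on $\Sigma = \bigcup_n \Sigma_n$. First I would use the compatibility clause of Lemma \ref{partial_maps} to fix, inductively, a coherent system of isotopies so that $\phi_{n+1}$ genuinely restricts to $\phi_n$ on $\Sigma_n$; the patched map $\phi$ is then a well-defined continuous injection, a homeomorphism onto its image $\Omega := \bigcup_n \Omega_n$ (its inverse being the corresponding limit of the $\phi_n^{-1}$), and it fixes $p$. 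Because every loop $\mu$ is compact and hence supported on some $\Sigma_n$, we get $\phi(\mu) = \phi_n(\mu) = f(\mu)$, so $\phi$ induces $f$ on the loop graph.

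It remains to promote $\phi$ to a self-homeomorphism of $\Sigma$, that is, to show that $\Omega$ is isotopic to all of $\Sigma$, and this is the step I expect to be the main obstacle. The key observation is that $f$ is invertible: for any loop $\mu$, the loop $f^{-1}(\mu)$ is supported on some $\Sigma_n$, whence $\phi(f^{-1}(\mu)) = f(f^{-1}(\mu)) = \mu$ and so $\mu \subseteq \Omega$. Thus $\Omega$ contains a representative of every isotopy class of loop. Because based loops fill $\Sigma$---any essential piece of topology in $\Sigma \setminus \Omega$, be it a puncture, a handle, or a nonplanar end, would be detected by a loop that cannot be isotoped into $\Omega$---the complement $\Sigma \setminus \Omega$ is inessential, and $\Omega$ is isotopic to all of $\Sigma$. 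After applying this ambient isotopy we may regard $\phi$ as a homeomorphism $\Sigma \to \Sigma$ fixing $p$ and inducing $f$, giving a class $[\phi] \in \MCG^*(\Sigma, p)$ that maps to $f$. The two delicate points meriting care are the coherent choice of the countably many isotopies used to define $\phi$, and the precise filling argument forcing $\Omega = \Sigma$.
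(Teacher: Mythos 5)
Your proposal is correct and follows essentially the same route as the paper: injectivity from Lemma \ref{homeo_trivial}, and surjectivity by choosing compatible isotopies and assembling the maps $\phi_n$ of Lemma \ref{partial_maps} into a direct limit homeomorphism inducing $f$. The one place you go beyond the paper is the final step: the paper simply asserts that the images $\Omega_n$ exhaust $\Sigma$, whereas you justify it by applying the construction to $f^{-1}(\mu)$ for an arbitrary loop $\mu$ and then invoking a filling argument---a reasonable and arguably necessary elaboration of the published proof.
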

\begin{proof}
	Injectivity follows directly from Lemma \ref{homeo_trivial}.

	To prove surjectivity, fix $f \in \Aut(\loopg(\Sigma, p))$, construct $\set{\phi_n}_{n \in \nat}$ as in Lemma \ref{partial_maps}, and apply isotopies so that each $\phi_n$ is a restriction of $\phi_{n+1}$. Define $\phi : \Sigma \to \Sigma$ by letting $\phi(x) = \phi_n(x)$ for some $n$ where $x \in \Sigma_n$. Since the $\Sigma_n$ exhaust $\Sigma$ and $\phi_n$ agrees with $\phi_m$ wherever both are defined, this map $\phi$ is well-defined. Since it is a homeomorphism on each $\Sigma_n$ and the $\Sigma_n$ exhaust $\Sigma$, it is a homeomorphism onto its image. And since the $\Omega_n$ also exhaust $\Sigma$, this image is in fact $\Sigma$, so $\phi : \Sigma \to \Sigma$ is a homeomorphism. Thus $[\phi] \in \MCG^*(\Sigma, p)$, and its image is $f$.
\end{proof}

\section{The relative arc graph}\label{arc_graph}
The contents of Theorem \ref{main} can be generalized to the case of the \emph{relative arc graph} $\arcg(\Sigma, P)$. Recall the definition of the relative arc graph from Section \ref{intro}, where we noted that that the loop graph $\loopg(\Sigma, p)$ is simply a special name for the relative arc graph $\arcg(\Sigma \setminus \set{p}, \set{p})$; more generally, if $p \in P$ then $\loopg(\Sigma \cup \set{p}, p)$ is an induced subgraph of $\arcg(\Sigma, P)$. So for the remainder of this section we pick some $p \in P$ and let $\Sigma' = \Sigma \cup \set{p}$. This gives an immediate result:

\begin{lemma}
	The homomorphism $\MCG^*(\Sigma, P) \to \Aut(\arcg(\Sigma, P))$ is injective.
\end{lemma}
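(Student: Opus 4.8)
The plan is to reduce injectivity to the loop-graph case already handled in Lemma \ref{homeo_trivial}, exploiting the fact, noted above, that $\loopg(\Sigma', p)$ sits inside $\arcg(\Sigma, P)$ as the induced subgraph spanned by the loops based at $p$. Suppose $f \in \MCG^*(\Sigma, P)$ lies in the kernel, so that $f$ fixes the isotopy class of every arc relative to $P$; our goal is to show $f$ is trivial in $\MCG^*(\Sigma, P)$.

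First I would check that $f$ fixes the puncture $p$. Any loop $\lambda$ based at $p$ is in particular an arc relative to $P$, both of whose ends approach $p$. Since $f$ fixes $\lambda$ up to isotopy and an isotopy of arcs cannot move endpoints, $f(\lambda)$ and $\lambda$ must approach the same punctures, which forces $f(p) = p$. Having fixed $p$, a representative homeomorphism of $f$ extends to a homeomorphism $\phi$ of $\Sigma' = \Sigma \cup \set{p}$ fixing $p$. Note that $\Sigma'$ is again infinite-type, since filling a single puncture changes neither the genus nor the (in)finiteness of the end space, so Lemma \ref{homeo_trivial} is applicable to $\Sigma'$.

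Next, because $f$ fixes every arc it fixes in particular every vertex of the induced subgraph $\loopg(\Sigma', p)$, so $\phi$ induces the identity automorphism of $\loopg(\Sigma', p)$. Lemma \ref{homeo_trivial}, applied to the infinite-type surface $\Sigma'$, then shows that $\phi$ is isotopic to the identity through homeomorphisms of $\Sigma'$ fixing $p$; write this isotopy as $H_t$ with $H_0 = \phi$ and $H_1 = \id$.

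Finally I would transfer this isotopy back to $\Sigma$. Restricting $H_t$ to $\Sigma = \Sigma' \setminus \set{p}$ (which is possible since $H_t(p) = p$) yields an isotopy from $f$ to the identity through homeomorphisms of $\Sigma$. The one point requiring care---and the main obstacle---is to confirm that this isotopy stabilizes $P$, so that it is a genuine isotopy in $\MCG^*(\Sigma, P)$. For this I would use that each $H_t$ is isotopic to $\id$ and therefore induces the identity on the space of ends of $\Sigma'$; hence each $H_t$ fixes every puncture in $P \setminus \set{p}$, while $H_t(p) = p$ by construction, so $H_t(P) = P$ for all $t$. This exhibits $f$ as trivial in $\MCG^*(\Sigma, P)$, establishing injectivity.
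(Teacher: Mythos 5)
Your proposal is correct and follows exactly the paper's route: restrict the trivial action on $\arcg(\Sigma, P)$ to the induced subgraph $\loopg(\Sigma', p)$ and invoke Lemma \ref{homeo_trivial}. You simply spell out the details the paper leaves implicit (that $f$ fixes $p$ because isotopies of arcs preserve endpoints, that $\Sigma'$ is still infinite-type, and that the isotopy on $\Sigma'$ restricts to one on $\Sigma$ respecting $P$), all of which check out.
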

\begin{proof}
	A homeomorphism $\phi : \Sigma \to \Sigma$ that induces the identity automorphism on $\arcg(\Sigma, P)$ must also induce the identity automorphism on $\loopg(\Sigma', p)$ and so by Lemma \ref{homeo_trivial} it is isotopic to the identity.
\end{proof}

To show surjectivity, let $f$ be an arbitrary automorphism of $\arcg(\Sigma, P)$. In Section \ref{useful}, we built a special triangualation $\mc{T}$. In this section, we will instead start with an arbitrary triangulation $T'$ in $\loopg(\Sigma', p)$ as described in Section \ref{general} and extend it to a triangulation $T$ in $\arcg(\Sigma, P)$.

\begin{lemma}\label{extend_to_arcg}
	If $T'$ is a triangulation in $\loopg(\Sigma', p)$, then there is a unique extension of $T'$ to a triangulation $T$ in $\arcg(\Sigma, P)$.
\end{lemma}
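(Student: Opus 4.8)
The plan is to reduce everything to a local analysis near the punctures of $P \setminus \set{p}$. Recall that under the identification of $\loopg(\Sigma', p)$ with an induced subgraph of $\arcg(\Sigma, P)$, a based loop is precisely an arc both of whose ends approach $p$. Consequently every arc of $\arcg(\Sigma, P)$ that is not already a loop has at least one endpoint at some puncture $q \in P \setminus \set{p}$, and these finitely many punctures are the only places where $T'$ can possibly be enlarged. So extending $T'$ to a triangulation $T$ of the full arc graph amounts to understanding, for each such $q$, which arcs touching $q$ are disjoint from $T'$ and must be added for maximality.

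The main step, and the one I expect to be the crux, is the following structural claim: for each $q \in P \setminus \set{p}$, the component $R_q$ of $\Sigma' \setminus T'$ containing $q$ is a punctured monogon, bounded by a single loop $\lambda_q \in T'$ whose one vertex is $p$. I would prove this from the maximality of $T'$. Since $\Sigma'$ is connected and $T'$ is nonempty, the boundary of $R_q$ is nonempty and, because the loops of $T'$ are based at $p$, it carries at least one copy of $p$. If $R_q$ were anything other than a punctured monogon around $q$---for instance if it contained a second end of $\Sigma'$, carried genus, or had more than one boundary edge---then starting at a copy of $p$ on $\partial R_q$ one could draw an essential loop inside $R_q$ encircling $q$ alone. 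Such a loop would be disjoint from $T'$ yet not already belong to it, contradicting maximality. In particular distinct punctures lie in distinct monogons, so the $\lambda_q$ are distinct and the $R_q$ are pairwise disjoint.

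Granting the claim, existence is straightforward: in each $R_q$ draw the arc $\mu_q$ from $q$ to the vertex $p$, which is an arc of $\arcg(\Sigma, P)$ disjoint from $T'$ and from every $\mu_{q'}$, and set $T = T' \cup \set{\mu_q : q \in P \setminus \set{p}}$. This $T$ is a clique, and it is maximal: any arc disjoint from $T$ is either a based loop, hence already in $T'$ by maximality of $T'$, or else it meets some $q$, in which case it is confined to $R_q$---now cut by $\mu_q$ into a degenerate triangle---and is therefore isotopic to $\mu_q$. For uniqueness, observe that any triangulation extending $T'$ can only add arcs disjoint from $T'$; as above each such arc is trapped in some $R_q$ and is forced to be $\mu_q$. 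Here I would invoke the fact that a punctured monogon admits a unique isotopy class of arc from its interior puncture to its vertex; the only subtlety worth spelling out is that the potential ambiguity---twisting $\mu_q$ about the core of $R_q$---is a Dehn twist about a curve encircling the single puncture $q$ and is therefore trivial. Maximality then forces every $\mu_q$ to appear, giving $T = T' \cup \set{\mu_q : q \in P \setminus \set{p}}$ and establishing uniqueness.
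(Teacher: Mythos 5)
Your proposal is correct and follows essentially the same route as the paper: use maximality of $T'$ to show each $q \in P \setminus \set{p}$ sits in a punctured monogon bounded by a loop of $T'$, then observe that the unique essential arc in that monogon (turning it into a degenerate triangle) is the only possible addition, forcing both existence and uniqueness of $T$. You simply spell out in more detail the steps the paper leaves implicit, such as why the arc in the monogon is unique up to isotopy.
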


\begin{figure}
	\centering
	\includegraphics[width=\textwidth/3]{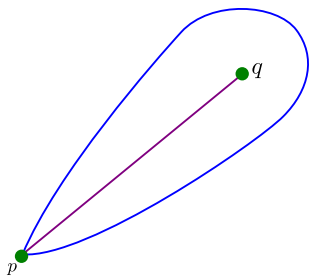}
	\caption{Adding a single arc (shown here in purple) turns a punctured monogon containing $q$ into a degenerate triangle.\label{degen}}
\end{figure}

\begin{proof}
	Consider a puncture $q \in P \setminus \set{p}$. Observe that $T'$ must include a loop that bounds a punctured monogon around $q$---if not, such a loop could be added to $T'$, contradicting maximality. The only way to extend $T'$ with an arc having an endpoint at $q$ is to turn this monogon into a degenerate triangle as in Figure \ref{degen}. Repeating this process for each $q \in P \setminus \set{p}$ will give the only triangulation in $\arcg(\Sigma, P)$ containing $T'$. Call this triangulation $T$.
\end{proof}

Note that each arc of $T$ is either contained in $T'$---in which case it is a loop based at $p$---or is one of these new arcs, each of which forms two sides of a single degenerate triangle in $T'$. The following lemma shows that this condition holds after applying the automorphism $f$.

\begin{lemma}\label{phi_p_T}
	Let $T'$ be a triangulation in $\loopg(\Sigma', p)$ and $T$ its extension to $\arcg(\Sigma, P)$ by Lemma \ref{extend_to_arcg}. Then there is a puncture $\phi(p) \in P$ such that for every loop $\lambda \in T'$ based at $p$, $f(\lambda)$ is a loop based at $\phi(p)$, and for every arc $\alpha \in T$ with exactly one endpoint at $p$, $f(\alpha)$ has exactly one endpoint at $\phi(p)$.
\end{lemma}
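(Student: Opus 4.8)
The plan is to avoid tracking the basepoint $p$ through the infinitely many triangles of $T$ that meet there, and instead to pin down $\phi(p)$ by a finiteness argument built from the punctured monogons around the other punctures of $P$. For each $q \in P \setminus \set{p}$, let $\mu_q \in T'$ be the loop bounding the punctured monogon around $q$ (which exists by the argument in Lemma \ref{extend_to_arcg}), and let $\alpha_q \in T$ be the arc cutting this monogon into a degenerate triangle. By Lemma \ref{monogon}, $f(\mu_q)$ again bounds a punctured monogon, say around a puncture $s_q$ and based at a puncture $r_q$; and since $f$ preserves degenerate triangles (Fact \ref{IM_triangle}), with $f(\mu_q)$ forced to play the role of the boundary loop, $f(\alpha_q)$ is the doubled arc of that degenerate triangle, running from $r_q$ to $s_q$.

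First I would record that the punctures $s_q$ are $\abs{P} - 1$ \emph{distinct} elements of $P$: the monogons bounded by the $f(\mu_q)$ are pairwise disjoint (as the $\mu_q$ are), so they enclose distinct punctures. Moreover each $s_q$ is a ``tip'': the only arc of $f(T)$ with an endpoint at $s_q$ is $f(\alpha_q)$, because $s_q$ lies inside the monogon bounded by $f(\mu_q)$, whose only interior arc is the doubled side $f(\alpha_q)$. In particular each $s_q$ has valence $1$ in $f(T)$, whereas each base $r_q$ has valence at least $2$, since it already carries the two ends of the loop $f(\mu_q)$. Hence no $r_q$ can equal any $s_{q'}$.

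Now I would \emph{define} $\phi(p)$ to be the unique puncture of $P$ that is not one of the $s_q$; this makes sense precisely because $P$ is finite and the $s_q$ account for exactly $\abs{P} - 1$ of its elements. Since $r_q \notin \set{s_{q'} : q' \in P \setminus \set{p}}$ for every $q$, each $r_q$ must be this leftover puncture, so $r_q = \phi(p)$ for all $q$. This already yields the statement for the arcs with one endpoint at $p$: each such arc is some $\alpha_q$, and $f(\alpha_q)$ runs from $r_q = \phi(p)$ to $s_q \neq \phi(p)$, so it has exactly one endpoint at $\phi(p)$.

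Finally, for an arbitrary loop $\lambda \in T'$ I would argue that $f(\lambda)$ is a loop based at $\phi(p)$ by elimination. Since $\lambda \in T'$ while no $\alpha_q$ lies in $T'$, we have $f(\lambda) \neq f(\alpha_q)$ for every $q$; as the only arc of $f(T)$ meeting a tip $s_q$ is $f(\alpha_q)$, neither endpoint of $f(\lambda)$ can be any $s_q$. The only remaining puncture of $P$ is $\phi(p)$, so both endpoints of $f(\lambda)$ lie at $\phi(p)$, i.e.\ $f(\lambda)$ is a loop based at $\phi(p)$. The step I expect to require the most care is justifying that $f(\mu_q)$ really is the boundary loop (and $f(\alpha_q)$ the doubled arc) of the image degenerate triangle, and that each tip $s_q$ meets no other arc of $f(T)$ — that is, correctly transporting the \emph{roles} of the two arcs in the degenerate-triangle/monogon configuration across $f$, which is exactly what legitimizes the valence bookkeeping and lets the finiteness of $P$ do the rest.
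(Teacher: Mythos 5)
Your proposal is correct and follows essentially the same route as the paper: both identify the $\abs{P}-1$ degenerate triangles of $f(T)$, observe that each encloses a ``tip'' puncture meeting exactly one arc of $f(T)$, define $\phi(p)$ as the one leftover puncture, and conclude by elimination. Your version is somewhat more explicit than the paper's about the distinctness of the tip punctures and about why $f(\mu_q)$ must remain the boundary loop of the image degenerate triangle (via Lemma \ref{monogon} and the valence count), points the paper handles by appeal to Figure \ref{degen_labeled}.
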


\begin{figure}
	\centering
	\includegraphics[width=\textwidth/3]{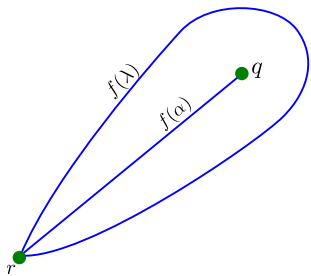}
	\caption{The image of a degenerate triangle under $f$\label{degen_labeled}}
\end{figure}

\begin{proof}
	Let $n = \abs{P}$. By construction $T$ has exactly $n-1$ degenerate triangles, and Fact \ref{IM_triangle} ensures that $f(T)$ has the same number of degenerate triangles. Let us label one such degenerate triangle as in Figure \ref{degen_labeled}. Observe that any arc in $\arcg(\Sigma, P)$ with an endpoint at $q$ either is $f(\alpha)$ or intersects $f(\lambda)$. Thus the only arc in $f(T)$ with an endpoint at $q$ is $f(\alpha)$.

	By applying the above argument to each degenerate triangle of $f(T)$, we find $n-1$ punctures, each of which is the endpoint of exactly one arc, leaving one puncture left over. This puncture will be $\phi(p)$, and note that in Figure \ref{degen_labeled} the puncture $r$ is the endpoint of at least two arcs ($f(\lambda)$ and $f(\alpha)$) and so $r = \phi(p)$.

	If $\alpha$ is an arc in $T$ with exactly one endpoint at $p$, then by construction it is the doubled edge in a degenerate triangle and so $f(\alpha)$ has exactly one endpoint at $\phi(p)$. All other arcs in $f(T)$ must have both endpoints at $\phi(p)$ because there are no other punctures in $P$ available. It follows that all other arcs in $f(T)$---that is, $f(\lambda)$ for every $\lambda \in T'$---are loops based at $\phi(p)$.
\end{proof}

We would like $\phi(p)$ to be defined independently of our choice of a triangulation $T'$ in the statement of Lemma \ref{phi_p_T}, but this identification is not immediately obvious.

\begin{lemma}\label{phi_p}
	For any loop $\lambda \in \loopg(\Sigma', p)$, $f(\lambda)$ is a loop based at $\phi(p)$.
\end{lemma}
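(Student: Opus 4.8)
The plan is to promote Lemma~\ref{phi_p_T}, which only controls the loops of one fixed triangulation, to a statement about every loop, by showing that the puncture it produces does not depend on the triangulation. First I would note that any loop $\lambda \in \loopg(\Sigma', p)$ lies in a triangulation of $\loopg(\Sigma', p)$: the singleton $\set{\lambda}$ is a clique, so Zorn's Lemma (exactly as in Lemma~\ref{extend}) extends it to a maximal clique $T'_\lambda$ of loops based at $p$. Applying Lemma~\ref{phi_p_T} to $T'_\lambda$ then shows that $f(\lambda)$ is itself a loop, based at \emph{some} puncture of $P$. The remaining content of the lemma is thus that this puncture is always the same one, so that it may unambiguously be called $\phi(p)$.

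The crucial intermediate claim is local: if two loops $\lambda_1, \lambda_2 \in \loopg(\Sigma', p)$ have representatives meeting only at $p$, then $f(\lambda_1)$ and $f(\lambda_2)$ are based at the same puncture. Indeed, $\set{\lambda_1, \lambda_2}$ is then a clique, which extends to a triangulation $T'$ of $\loopg(\Sigma', p)$; Lemma~\ref{phi_p_T}, applied to this single $T'$, assigns one puncture to all of its loops simultaneously, in particular to both $\lambda_1$ and $\lambda_2$.

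To globalize, I would use that $\loopg(\Sigma', p)$ is connected. Since $\loopg(\Sigma', p) = \arcg(\Sigma, \set{p})$, this is part of the result of Aramayona, Fossas, and Parlier \cite{aramayona} that the relative arc graph is connected (indeed infinite-diameter Gromov hyperbolic). Given arbitrary loops $\lambda_1, \lambda_2$, choose a path $\lambda_1 = \mu_0, \mu_1, \dotsc, \mu_k = \lambda_2$ in $\loopg(\Sigma', p)$, so that consecutive $\mu_i$ have representatives meeting only at $p$. Applying the local claim to each pair $(\mu_i, \mu_{i+1})$ shows that all the $f(\mu_i)$ are based at one common puncture; in particular $f(\lambda_1)$ and $f(\lambda_2)$ are based at the same puncture. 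Defining $\phi(p)$ to be this common value proves the lemma and, retroactively, shows that the puncture of Lemma~\ref{phi_p_T} never depended on the choice of $T'$.

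The step I expect to demand the most care is the interface between the local claim and connectivity: one must check that the connecting path can genuinely be taken through loops based at $p$, so that Lemma~\ref{phi_p_T} applies at every step, rather than through more general arcs. It is worth emphasizing why connectivity cannot here be replaced by the easier observation used for the curve graph: unlike the curve graph, $\loopg(\Sigma', p)$ is \emph{not} of diameter $2$, so one cannot in general produce a single loop disjoint from two given ones, and the chaining along a path is essential.
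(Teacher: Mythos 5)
Your proposal is correct and follows essentially the same route as the paper: extend $\{\lambda\}$ (resp.\ $\{\lambda,\mu\}$ for disjoint loops) to a triangulation of $\loopg(\Sigma',p)$, apply Lemma~\ref{phi_p_T}, and then chain along a path using the connectivity of the loop graph from \cite{aramayona}. The only difference is expository emphasis (your closing remark on why a diameter-$2$ shortcut is unavailable), not mathematical content.
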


\begin{proof}
	By extending $\lambda$ to a triangulation in $\loopg(\Sigma', p)$ via Lemma \ref{extend}, we see by Lemma \ref{phi_p_T} that $f(\lambda)$ is indeed a loop. If $\mu \in \loopg(\Sigma', p)$ is disjoint from $\lambda$, then the set $\set{\lambda, \mu}$ can also be extended to a triangulation in $\loopg(\Sigma', p)$, which means $f(\lambda)$ and $f(\mu)$ are based at the same puncture.

	Even if $\lambda$ and $\mu$ are not disjoint, the loop graph is connected (Theorem 1.1 of \cite{aramayona}) and so there is a path $\lambda = \lambda_0, \dotsc, \lambda_k = \mu$ so that each $\lambda_i$ and $\lambda_{i+1}$ are disjoint. Then $f(\lambda_i)$ and $f(\lambda_{i+1})$ are based at the same puncture, and so by induction are $f(\lambda)$ and $f(\mu)$.

	Thus the image of every loop will be based at $\phi(p)$, regardless of which triangulation was used to find $\phi(p)$ in Lemma \ref{phi_p_T}.
\end{proof}

We now have very nearly all the ingredients necessary to apply the results of Section \ref{constructing}. If $\phi(p) = p$, then Lemma \ref{phi_p} means $f \in \Aut(\loopg(\Sigma', p))$ and we can apply Corollary \ref{bijective} directly. If not, we need only a bit more bookkeeping.

\begin{lemma}
	There exists a homeomorphism $\phi \in \MCG^*(\Sigma, P)$ inducing the automorphism $f$ on $\arcg(\Sigma, P)$.
\end{lemma}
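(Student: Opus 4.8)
The plan is to reduce the surjectivity statement to the loop-graph case already settled in Corollary \ref{bijective}. Recall from Lemma \ref{phi_p} that $f$ carries every loop based at $p$ to a loop based at a single puncture $\phi(p) \in P$. The first move is to arrange $\phi(p) = p$: since every element of $P$ is an isolated planar end, the change-of-coordinates principle supplies a homeomorphism $g \in \MCG^*(\Sigma, P)$ with $g(\phi(p)) = p$, whose induced automorphism $g_*$ carries loops based at $\phi(p)$ to loops based at $p$. Replacing $f$ by $F = g_* \circ f$, I may assume $\phi(p) = p$, because producing some $\Psi \in \MCG^*(\Sigma, P)$ inducing $F$ immediately yields $g^{-1}\Psi$ inducing $f$.

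With $\phi(p) = p$, Lemma \ref{phi_p} shows that $F$ restricts to an automorphism of $\loopg(\Sigma', p)$, so Corollary \ref{bijective}, applied to the infinite-type surface $\Sigma'$, yields a homeomorphism $\Psi \colon \Sigma' \to \Sigma'$ fixing $p$ and inducing $F$ on every based loop. The next step is to check that $\Psi$ stabilizes $P$. For each $q \in P \setminus \set{p}$ I fix a loop $\lambda_q$ bounding a punctured monogon around $q$; by Lemma \ref{monogon} the image $\Psi(\lambda_q) = F(\lambda_q)$ again bounds a punctured monogon. Extending $\lambda_q$ to a triangulation and feeding it through the degenerate-triangle analysis of Lemma \ref{phi_p_T}, the puncture sitting inside the new monogon is exactly the endpoint of the image of the added arc, and hence lies in $P$. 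Thus $\Psi$ sends $P \setminus \set{p}$ into itself; being a homeomorphism it permutes the finite set $P$, so $[\Psi] \in \MCG^*(\Sigma, P)$.

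It then remains to upgrade the agreement $\Psi_* = F$ from loops to all arcs. The key observation is that for any loop $\mu$, writing $\mu = \Psi(\lambda) = F(\lambda)$ for the unique loop $\lambda$ with this image, one has $\mu$ disjoint from $\Psi_*(\beta)$ iff $\lambda$ is disjoint from $\beta$ iff $\mu$ is disjoint from $F(\beta)$. Hence for every arc $\beta$ the images $\Psi_*(\beta)$ and $F(\beta)$ are disjoint from exactly the same family of based loops, and the proof is complete once one knows that based loops \emph{separate} arcs, i.e.\ that distinct arcs are disjoint from distinct families of loops in $\loopg(\Sigma', p)$. For an arc with an endpoint at $p$ this follows from a monogon-uniqueness argument: inside the punctured monogon cut off by a loop surrounding the arc there is, up to isotopy, a unique arc with the prescribed endpoints, so such arcs are pinned down by the loops adjacent to them.

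The crux of the argument, and the step I expect to be the main obstacle, is precisely this separation statement in the remaining case of an arc $\beta$ with \emph{neither} endpoint at $p$, where no single loop surrounds $\beta$. The plan here is to express such a $\beta$ combinatorially in terms of arcs incident to $p$ — for instance by realizing $\beta$ through two arcs running from $p$ alongside it to the two punctures it joins — whose rigidity is already controlled, and then to propagate the separation statement across the connectivity of the arc graph (Theorem 1.1 of \cite{aramayona}) much as in the inductive step of Lemma \ref{phi_p}.
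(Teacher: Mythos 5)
Your reduction to the loop graph is the same as the paper's: compose $f$ with the automorphism induced by a homeomorphism swapping $p$ and $\phi(p)$, apply Corollary \ref{bijective} to the resulting automorphism of $\loopg(\Sigma', p)$, and undo the swap. Your verification that the resulting homeomorphism stabilizes $P$ (via the degenerate-triangle analysis of Lemma \ref{phi_p_T}) is a welcome addition; the paper asserts $\MCG^*(\Sigma', p) \subseteq \MCG^*(\Sigma, P)$ without comment, and your argument is essentially what is needed to justify that the specific $\phi'$ lands there.

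The gap is exactly where you say you expect it: the passage from agreement on based loops to agreement on all arcs. Your proposed route -- that an arc is determined by the set of based loops disjoint from it -- is only sketched for arcs with an endpoint at $p$, and for arcs with neither endpoint at $p$ you offer a ``plan'' (run two auxiliary arcs from $p$ alongside $\beta$, propagate via connectivity) rather than a proof. This is the hard case, and nothing in your write-up establishes the separation statement there; without it the final identification $\Psi_*(\beta) = F(\beta)$ does not follow. The paper sidesteps this entirely: it observes that $\phi$ and $f$ agree on the triangulation $T'$ of $\loopg(\Sigma', p)$ and hence, by the uniqueness of the extension in Lemma \ref{extend_to_arcg}, on the full triangulation $T$ of $\arcg(\Sigma, P)$; it then restricts to the exhausting finite-type subsurfaces $\Sigma_n$, on each of which $\phi$ preserves a triangulation, and invokes Fact \ref{IM_enough} to conclude that $\phi$ and $f$ agree on every arc supported on $\Sigma_n$. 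Since every arc lies in some $\Sigma_n$, this finishes the proof without ever needing loops to separate arcs. If you want to salvage your approach you would have to actually prove the separation lemma for arcs disjoint from $p$; the cleaner fix is to replace your last two paragraphs with the triangulation-plus-Fact \ref{IM_enough} argument, which reuses machinery already developed in Lemma \ref{partial_maps}.
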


\begin{proof}
	Let $\psi$ be a homeomorphism of $\Sigma$ that transposes $p$ and $\phi(p)$ while otherwise fixing the ends of $\Sigma$, and let $g$ be the automorphism of $\arcg(\Sigma, P)$ induced by $\psi$. Then consider $f' = g \circ f$. By Lemma \ref{phi_p} and the construction of $g$, every loop $\lambda \in \loopg(\Sigma', p)$ is mapped to a loop $f'(\lambda)$ based at $p$. In other words, $f' \in \Aut(\loopg(\Sigma', p))$, and so by Corollary \ref{bijective} it is induced by a homeomorphism $\phi' \in \MCG^*(\Sigma', p) \subseteq \MCG^*(\Sigma, P)$. Let $\phi = \psi^{-1} \circ \phi'$.

	By construction, $\phi$ agrees with $f$ on every loop based at $p$; in particular they agree on the image of $\mc{T}'$ and thus of $\mc{T}$. Then as in the proof of Lemma \ref{partial_maps}, $\phi$ agrees with $f$ on each finite-type subsurface $\Sigma_n$ because it preserves a triangulation of that subsurface. Since every arc is contained in $\Sigma_n$ for high enough $n$, $\phi$ must agree with $f$ on all arcs, and so $\phi$ induces $f$.
\end{proof}

\bibliographystyle{alpha}
\bibliography{refs}{}

\end{document}